\newfont{\bb}{msbm10 at 10pt}
\def\r{\hbox{\bb R}}
\def\s{\hbox{\bb S}}
\def\z{\hbox{\bb Z}}
\def\h{\hbox{\bb H}}
\newtheorem{theorem}{Theorem}[section]
\newtheorem{definition}[theorem]{Definition}
\newtheorem{remark}[theorem]{Remark}
\begin{document}
\title{Invariant surfaces in  homogenous space Sol \\with constant curvature}
\author{Rafael L\'opez\footnote{Partially supported by MEC-FEDER
 grant no. MTM2007-61775 and
Junta de Andaluc\'{\i}a grant no. P06-FQM-01642.}\\Departamento de Geometr\'{\i}a y Topolog\'{\i}a\\ Universidad de Granada\\ 18071 Granada, Spain\\
email: rcamino@ugr.es}

\date{}
\maketitle

\begin{abstract} A surface in  homogenous space Sol is said to be an  invariant surface if it is invariant under some of the two 1-parameter groups of isometries of the ambient space whose fix point sets are totally geodesic surfaces.  In this work we study invariant surfaces that satisfy a certain condition on their curvatures. We classify invariant surfaces with constant mean curvature and constant  Gaussian curvature. Also, we characterize  invariant surfaces that satisfy a linear Weingarten relation.
\end{abstract}

\emph{MSC:}  53A10

\emph{Keywords:} Homogenous space; invariant surface; mean curvature; Gaussian curvature.

\section{Introduction}

The space Sol is a  simply connected  homogeneous 3-manifold whose isometry group has dimension $3$
and it is one of the eight models of geometry of Thurston \cite{th}.  As Riemannian manifold, the space Sol  can be
represented by $\r^3$ equipped with the metric
$$\langle,\rangle=ds^2=e^{2z}dx^2+e^{-2z}dy^2+dz^2$$
 where $(x,y,z)$ are canonical coordinates of $\r^3$. The space Sol, with the group operation
 $$(x,y,z)\ast (x',y',z')=(x+e^{-z}x',y+e^{z}y',z+z'),$$
is a Lie group and the metric $ds^2$ is  left-invariant.

Although Sol is a homogenous space and the action of the isometry group is transitive, the fact that the number of isometries is low (for example, there are no rotations) makes that the knowledge of the geometry of submanifolds is far to be complete. For example, it is known the geodesics (\cite{tr}) and more recently, the totally umbilical surfaces (\cite{sot}) and some properties on  surfaces with constant mean curvature (\cite{dm,il,lo}).

A first step in the understanding of the geometry of Sol consists into consider some type of symmetric property in the surface that allows easier to realize a problem of classification. On the other hand, it is also natural to take some assumptions of constancy on the curvatures of the surface. For these reasons, we will study surfaces invariant by a group of isometries of the ambient space as well as that some condition on the curvature, for example, that  the mean curvature or the Gaussian curvature is constant.

As we have pointed out, the isometry group Iso(Sol) has dimension $3$ and the component of the identity is generated by the following families of isometries:
\begin{eqnarray*}
& & T_{1,c}(x,y,z):=(x+c,y,z)\\
& &T_{2,c}(x,y,z):=(x,y+c,z)\\
& &T_{3,c}(x,y,z):=(e^{-c}x,e^{c}y,z+c),
\end{eqnarray*}
where $c\in\r$ is a real parameter. These isometries are left multiplications by elements of Sol and so, they are left-translations with respect to the structure of Lie group. Remark that the elements $T_{1,c}$ and $T_{2,c}$ are Euclidean translations along horizontal vector and that the set of fixed points are totally geodesic surfaces in
Sol. In this work we consider surfaces invariant under the 1-parametric group of isometries $T_{i,c}$, with $i=1,2$.

\begin{definition} A surface $S$ in Sol is said an invariant surface if it is invariant under one of the 1-parameter groups of isometries $\{T_{i,c};c\in\r\}$, with $i=1,2$.
\end{definition}

After an isometry of the ambient space, an invariant surface under the group $\{T_{2,c}\}_{c\in\r}$ converts into an invariant surface under the group $\{T_{1,c}\}_{c\in\r}$: this can done by
  taking the isometry of Sol given by $\phi(x,y,z)=(y,x,-z)$. Thus, throughout this work, we consider invariant surfaces under the first group $\{T_{1,c}\}_{c\in\r}$.

In this paper we study surfaces in Sol with some condition on their curvatures. In Section \ref{sect3} we classify all invariant surfaces of Sol with constant mean curvature $H$, including minimal surfaces (some pictures of surfaces with  $H\not=0$ appeared in \cite{dm}). In Section \ref{sect4} we construct and classify all invariant surfaces with constant (intrinsic or extrinsic) Gaussian curvature  ($K_{int}$ and $K_{ext}$). The fact that the Sol geometry has not constant sectional curvature makes that the constancy of $K_{int}$ or  $K_{ext}$ does not imply the other one. Finally in Section \ref{sect5} we initiate the study of linear Weingarten invariant surfaces by considering a relation of type $\kappa_1=m\kappa_2$, where $\kappa_i$ are the principal curvatures and $m\in\r$.

The study of surfaces with constant curvature, specially with constant mean curvature, in homogeneous 3-spaces and invariant under the action of a one-parameter group of isometries of the ambient space has been recently of interest for many geometers. Examples  can seen when the  ambient space is the Heisenberg group (\cite{cpr,fmp,in,mo2})  and the product space $\h^2\times\r$ (\cite{mo1,on,st}).

\section{Local computations of curvatures}

In this section we will recall some basic geometric properties of the space Sol and we will compute the curvatures of an invariant surface. See  also \cite{dm,tr}. With respect to the metric $ds^2$ an orthonormal basis of left-invariant vector fields is given by
$$E_1=e^{-z}\frac{\partial}{\partial x},\ \ E_2=e^{z}\frac{\partial}{\partial y},\ \ E_3=\frac{\partial}{\partial z}.$$
It is well known that the isometry group of Sol has dimension three. In particular, we can choose the following basis of Killing vector fields:
$$\frac{\partial}{\partial x},\hspace*{.5cm} \frac{\partial}{\partial y},\hspace*{.5cm}  -x\frac{\partial}{\partial x}+y\frac{\partial}{\partial y}+\frac{\partial}{\partial z}.$$
The one-parameter subgroups of isometries generated by the above three Killing vector fields were described in Introduction, namely, $\{T_{i,c};c\in\r\}$, $1\leq i\leq 3$, respectively.

The understanding of the geometry of Sol is given by the next three foliations:
\begin{eqnarray*}
& &{\cal F}_1: \{P_t=\{(t,y,z);y,z\in\r\}\}_{t\in\r}\\
& &{\cal F}_2: \{Q_t=\{(x,t,z);x,z\in\r\}\}_{t\in\r}\\
& &{\cal F}_3: \{R_t=\{(x,y,t);x,y\in\r\}\}_{t\in\r}.
\end{eqnarray*}
The foliations ${\cal F}_1$ and ${\cal F}_2$  are determined by the isometry groups $\{T_{1,c}\}_{c\in\r}$ and $\{T_{2,c}\}_{c\in\r}$ respectively, and they
describe (the only) totally geodesic surfaces of Sol, being  each leaf isometric to a hyperbolic plane;  the foliation ${\cal F}_3$ realizes by minimal surfaces, all them isometric to  Euclidean plane.

The Riemannian connection ${\nabla}$ of Sol with respect to $\{E_1,E_2,E_3\}$  is
$$\begin{array}{lll}
{\nabla}_{E_1} E_1=-E_3 & {\nabla}_{E_1}E_2=0&{\nabla}_{E_1}E_3=E_1\\
{\nabla}_{E_2} E_1=0 & {\nabla}_{E_2}E_2=E_3&{\nabla}_{E_2}E_3=-E_2\\
{\nabla}_{E_3} E_1=0 & {\nabla}_{E_3}E_2=0&{\nabla}_{E_3}E_3=0\\
\end{array}
$$
Now we are going to compute the curvatures of an invariant surface. An invariant surface $S$ is determined by the  intersection curve $\alpha$ with each one of the leaves of the corresponding foliation together the group of isometries. Any curve $\alpha$ is called a generating curve of the surface, and by our choice of group, we will consider that $\alpha$ is the curve $S\cap P_0$. Let us take a parametrization of $\alpha$ given by
$\alpha(s)=(0,y(s),z(s))$, $s\in I$,  where $s$ is the arc-length parameter. Thus
$$e^{-z(s)}y'(s)=\cos\theta(s),\hspace*{1cm}z'(s)=\sin\theta(s),$$
where $\theta=\theta(s)$ is a certain smooth function. We parametrize $S$ by
$$X(s,t)=(t,y(s),z(s)),\ \ s\in I\subset\r, \ t\in\r.$$
From now, we drop the dependence of $y$, $z$ and $\theta$ on the variable $s$. We have
\begin{eqnarray*}
& &e_1:=X_s=(0,y',z')=\cos\theta E_2+\sin\theta E_3.\\
& &e_2:=X_t=(1,0,0)=e^{z}E_1.
\end{eqnarray*}
We choose as Gauss map
$$N=-\sin\theta E_2+\cos\theta E_3$$
and this will be the chosen orientation on $S$ throughout this paper.  Let $H$ and $K_{ext}$ be the mean curvature and the extrinsic Gaussian curvature of $S$, respectively. Using classical notation,
$$H=\frac12\frac{Eg-2Ff+Ge}{EG-F^2},\hspace*{1cm}K_{ext}=\frac{eg-f^2}{EG-F^2},$$
with
$$\begin{array}{lll}
 E=\langle e_1,e_1\rangle, &F=\langle e_1,e_2\rangle, &G=\langle e_2,e_2\rangle.\\
e=\langle N,\nabla_{e_1}e_1\rangle, & f=\langle N,\nabla_{e_1}e_2\rangle, &g=\langle N,\nabla_{e_2}e_2\rangle.
\end{array}$$
In our case, the coefficients of the first fundamental form are
$$E=1,\hspace*{.5cm} F=0,\hspace*{.5cm} G=e^{2z},$$
and $EG-F^2=e^{2z}$. The values of $\nabla_{e_i}e_j$ are
\begin{eqnarray*}
\nabla_{e_1}e_1&=&(\theta'+\cos\theta)(-\sin\theta E_2+\cos\theta E_3).\\
\nabla_{e_1}e_2&=&\nabla_{e_2}e_1=\sin\theta e^{z}E_1.\\
\nabla_{e_2}e_2&=&-e^{2z}E_3.
\end{eqnarray*}
 Then
$$e=\theta'+\cos\theta,\hspace*{.5cm} f=0,\hspace*{.5cm} g=-e^{2z}\cos\theta.$$
As conclusion,
\begin{equation}\label{curv}
H=\frac12\ \theta',\hspace*{1cm}K_{ext}=-\cos\theta(\theta'+\cos\theta).
\end{equation}
Hence that  the principal curvatures are
$$\kappa_1=\theta'+\cos\theta,\hspace*{1cm}\kappa_2=-\cos\theta.$$
In order to obtain the intrinsic Gauss curvature $K_{int}$, recall that
$K_{int}=K_{ext}+K(e_1\wedge e_2)$, where $K(e_1\wedge e_2)$ is the sectional curvature of each tangent plane and
$$K(e_1\wedge e_2)=\frac{\langle\nabla_{e_1}\nabla_{e_2} e_2-\nabla_{e_2}\nabla_{e_1} e_2-\nabla_{[e_1,e_2]}e_2,e_1\rangle}{EG-F^2}.$$
Now
\begin{eqnarray*}
\nabla_{e_1}\nabla_{e_2} e_2&=&\nabla_{e_1}(-e^{2z}E_3)=e^{2z}(\cos\theta E_2-2\sin\theta E_3).\\
\nabla_{e_2}\nabla_{e_1}e_2 &=& \nabla_{e_2}(\sin\theta e^{z} E_1)= -\sin\theta e^{2z}E_3.\\
\nabla_{[e_1,e_2]}e_2&=&0.\\
\end{eqnarray*}
Thus
$$K(e_1\wedge e_2)=\cos^2\theta-\sin^2\theta.$$
As consequence
\begin{equation}\label{kint}
K_{int}=-\theta'\cos\theta-\sin^2\theta.
\end{equation}
If we consider an invariant surface $S$ of Sol, any condition of its curvature writes as an ordinary differential equation ${\cal E}(s,\theta,\theta')=0$ on the function $\theta$. In order to calculate $S$, we have to obtain the generating curve $\alpha$, and so, we need to solve ${\cal E}=0$ together the system
\begin{eqnarray}
y'(s)&=&e^{z(s)}\cos\theta(s)\label{s-1} \\
z'(s)&=&\sin\theta(s)\label{s-2}.
\end{eqnarray}
We can also assume that $\alpha$ is locally a graph on the $y$-axis or on the $z$-axis. If $\alpha$ writes as
$\alpha(y)=(0,y,z(y))$, the change of variables is
\begin{equation}\label{change}
\theta'(s)\rightarrow e^{2z}\frac{z''+z'^2}{(1+z'^2 e^{2z})^{3/2}},\ \sin\theta\rightarrow\frac{z'e^z}{\sqrt{1+z'^2 e^{2z}}},\
\cos\theta\rightarrow \frac{1}{\sqrt{1+z'^2e^{2z}}}.
\end{equation}
Depending on each case, we use interchangeably (\ref{s-1})-(\ref{s-2}) by \ref{change}).

In what follows, we  omit the integration constants for the function $y(s)$, since it represents an isometry  of the surface by translations of type $T_{2,c}$. Similarly, we omit the additive constants of the function $z$: in this case, the isometry $\phi(x,y,z)=(e^\lambda x,e^{-\lambda}y,z-\lambda)$ converts the generating curve
$s\longmapsto (0,y(s),z(s)+\lambda)$ into $s\longmapsto (0,e^{-\lambda}y(s), z(s))$.

\section{Surfaces with constant mean curvature}\label{sect3}

\begin{theorem}\label{t-minimal}
The only invariant minimal  surfaces of Sol  are:
\begin{enumerate}
\item a leaf of the foliation ${\cal F}_2$ or,
\item a leaf of the foliation ${\cal F}_3$ or,
\item the surface generated by the graphic $z(y)=\log(y)$.
\end{enumerate}
\end{theorem}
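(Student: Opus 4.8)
The plan is to use the formula $H=\tfrac12\,\theta'$ from (\ref{curv}), so the minimal surface condition is simply $\theta'=0$, i.e. $\theta\equiv\theta_0$ is constant along the generating curve. First I would split into cases according to the value of the constant $\cos\theta_0$ and $\sin\theta_0$. If $\cos\theta_0=0$, then from (\ref{s-1}) we get $y'=0$, so $y$ is constant and the generating curve lies in a leaf $Q_t$ of ${\cal F}_2$; since $\alpha$ together with the $T_{1,c}$-action sweeps out exactly that leaf, we recover case 1. If instead $\sin\theta_0=0$, then from (\ref{s-2}) $z'=0$, so $z$ is constant and the surface is a leaf $R_t$ of ${\cal F}_3$, which is case 2. (One should check these are genuinely minimal: for ${\cal F}_2$ this is the totally geodesic statement recalled in Section 2, and for ${\cal F}_3$ it is the "minimal surfaces isometric to the Euclidean plane" remark; alternatively both follow directly from $\theta'=0$.)

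The remaining case is $\theta_0$ constant with both $\cos\theta_0\neq0$ and $\sin\theta_0\neq0$. Here I would integrate the system (\ref{s-1})--(\ref{s-2}) directly. From (\ref{s-2}), $z'=\sin\theta_0$ is a nonzero constant, so $z(s)=(\sin\theta_0)\,s$ (dropping the additive constant as permitted in Section 2). Substituting into (\ref{s-1}) gives $y'(s)=(\cos\theta_0)\,e^{(\sin\theta_0)s}$, hence $y(s)=\frac{\cos\theta_0}{\sin\theta_0}\,e^{(\sin\theta_0)s}$ up to the integration constant we are allowed to discard. Eliminating $s$ between these two expressions yields a relation of the form $y=c\,e^{z}$ for a nonzero constant $c$, equivalently $z=\log y+\mathrm{const}$ (on the domain $y>0$, with the symmetric branch on $y<0$). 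Using the freedom in Section 2 to kill the additive constant in $z$ — via the isometry $\phi(x,y,z)=(e^\lambda x,e^{-\lambda}y,z-\lambda)$ — I can normalize the constant to $1$, obtaining the graph $z(y)=\log(y)$, which is case 3.

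Finally I would verify the converse direction for case 3, namely that the surface generated by $z(y)=\log y$ really is minimal: substitute $z'=1/y$, $z''=-1/y^2$ into the graph version of the mean curvature obtained from the change of variables (\ref{change}), i.e. check that $z''+z'^2=0$, which holds since $-1/y^2+1/y^2=0$. This confirms $H=0$ and completes the classification. The only subtlety — the "main obstacle," though a mild one — is bookkeeping the integration constants and the allowed normalizing isometries correctly, so that the three cases are genuinely exhaustive and non-overlapping and so that case 3 is stated in the normalized form $z(y)=\log(y)$ rather than $z=\log y+\mathrm{const}$; this is exactly what the remarks at the end of Section 2 are set up to handle.
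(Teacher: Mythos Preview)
Your proof is correct and follows essentially the same approach as the paper: reduce $H=0$ to $\theta'=0$, split according to whether $\cos\theta_0$ or $\sin\theta_0$ vanishes, and in the generic case integrate (\ref{s-1})--(\ref{s-2}) to obtain the logarithmic graph. Your version is slightly more detailed in handling the normalizing isometry and in verifying the converse via $z''+z'^2=0$, which the paper mentions only as an alternative argument after the proof.
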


\begin{proof}
If $S$ is minimal, then $\theta'=0$, that is, $\theta(s)=\theta_0$ for some constant $\theta_0\in\r$ and  $z(s)=(\sin\theta_0)s$. If $\sin\theta_0=0$, then (\ref{s-1})-(\ref{s-2}) gives $z(s)=\lambda$ and $y(s)=(\cos\theta_0)s$. This says that $\alpha$ is a horizontal straight-line and $S$ is a leaf of ${\cal F}_3$. Similarly, if $\cos\theta_0=0$, then $y$ is a constant function, $\alpha$ is a vertical straight-line and the surface
belongs to the family ${\cal F}_2$.

If $\sin\theta_0\not=0$, we have from (\ref{s-2}) that
$$y'=e^{ (\sin\theta_0)s}\cos\theta_0\Rightarrow y(s)=\cot\theta_0 e^{ (\sin\theta_0)s}.$$
Then
$$\alpha(s)=\Big(\cot\theta_0 e^{ (\sin\theta_0)s},(\sin\theta_0)s\Big).$$
This means that $\alpha$ describes the graphic of a logarithmic  function: $z=\log((\tan\theta_0)y)$.
\end{proof}
The above result can be obtained by using (\ref{change}): if the surface is not a graph of $z=z(y)$, then $y$ is a constant function and the surface is a leaf of ${\cal F}_2$. The minimality condition $H=0$ writes as $z''+z'^2=0$. Then $z$ is constant (and $S$ is a leaf of ${\cal F}_3$) or by integration, we have up constants that $\log(z')=-z$ and $e^z=y$.
\begin{theorem}
Let $S$ be an invariant surface in Sol with constant mean curvature $H\not=0$. We write $\alpha(s)=(0,y(s),z(s))$ the generating curve of $S$. Then
\begin{enumerate}
\item The curve $\alpha$ is invariant by a discrete group of translation in the $y$-direction.
\item The $z$-coordinate  is bounded and periodic.
\item The curve $\alpha$ has self-intersections.
\item The velocity vector of $\alpha$ turns around the origin.
\end{enumerate}

\end{theorem}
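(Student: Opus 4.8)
The plan is to reduce the constant mean curvature condition to a first-order autonomous ODE for $\theta$ as a function of $z$, extract a conserved quantity (a "first integral"), and then read off the qualitative behavior of the generating curve from the phase portrait of that ODE. Since $H=\tfrac12\theta'$ is constant, we have $\theta'=2H\neq0$, so $\theta$ is a strictly monotone function of the arc-length $s$; in particular $\theta$ is a global coordinate along $\alpha$ and we may regard $z$ and $y$ as functions of $\theta$. Using $z'=\sin\theta$ from (\ref{s-2}) and $\theta'=2H$, we get $dz/d\theta=\sin\theta/(2H)$, which integrates explicitly to $z(\theta)=-\cos\theta/(2H)+\text{const}$ (the additive constant is suppressed by the normalization discussed before Section \ref{sect3}). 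This single formula already yields item (2): $z(\theta)=-\cos\theta/(2H)$ is manifestly bounded, with $|z|\le 1/(2|H|)$, and since $\theta$ increases linearly in $s$ (namely $\theta(s)=2Hs+\theta_0$), the function $z(s)=-\cos(2Hs+\theta_0)/(2H)$ is periodic in $s$ with period $\pi/|H|$. It also immediately gives item (4): as $s$ ranges over $\r$, $\theta(s)=2Hs+\theta_0$ takes all values in $\r$, so the velocity vector $e_1=\cos\theta\,E_2+\sin\theta\,E_3$ (equivalently, the unit velocity of $\alpha$ in the $(y,z)$-plane with respect to the flat metric, or just its direction in coordinates) rotates through all directions, i.e.\ it turns around the origin infinitely many times.

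Next I would handle the $y$-coordinate to obtain items (1) and (3). From (\ref{s-1}), $dy/d\theta = y'/\theta' = e^{z(\theta)}\cos\theta/(2H)$, and substituting $z(\theta)=-\cos\theta/(2H)$ gives
\begin{equation*}
\frac{dy}{d\theta}=\frac{\cos\theta}{2H}\,\exp\!\Big(\!-\frac{\cos\theta}{2H}\Big),
\end{equation*}
whose right-hand side is a smooth $2\pi$-periodic function of $\theta$. Therefore $y(\theta+2\pi)-y(\theta)$ equals a fixed constant $L:=\int_0^{2\pi}\frac{\cos\theta}{2H}e^{-\cos\theta/(2H)}\,d\theta$, independent of $\theta$. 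The key computation is to show $L\neq 0$: since the map $\theta\mapsto z(\theta)$ is even about $\theta=0$ and the two halves $\cos\theta>0$ and $\cos\theta<0$ of the circle are weighted by the strictly monotone factor $e^{-\cos\theta/(2H)}$, the positive and negative contributions to $L$ cannot cancel; more precisely, writing $u=\cos\theta$ and integrating by parts, one finds $L=\int_{-1}^{1}\big(\text{positive kernel}\big)\,du\cdot(\text{sign depending on }H)$, which is nonzero. Granting $L\neq0$: after one full turn $\theta\mapsto\theta+2\pi$ the curve $\alpha$ returns to the same $z$-value (and the same tangent direction) but shifted by $L$ in the $y$-direction, so $\alpha$ is invariant under the discrete translation group generated by $T_{2,L}$, which is item (1). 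Finally, for item (3): because $z(\theta)$ is $2\pi$-periodic but not $\pi$-antiperiodic-free — concretely, $z(\theta_0)=z(-\theta_0)$ for every $\theta_0$, so within a single period there exist two distinct parameter values with the same $z$; comparing their $y$-values over one or more periods (using that $y$ is not monotone in $\theta$, since $dy/d\theta$ changes sign with $\cos\theta$) produces a coincidence $\alpha(\theta_1)=\alpha(\theta_2)$ with $\theta_1\neq\theta_2$. One clean way: $y(\theta)$ increases on an interval where $\cos\theta>0$ and decreases where $\cos\theta<0$, so $y$ attains interior local maxima and minima; combined with the drift $L$, an intermediate-value argument on $y(\theta)-y(\theta')$ over shifted periods forces a self-intersection.

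The main obstacle is the verification that the drift constant $L=\int_0^{2\pi}\frac{\cos\theta}{2H}e^{-\cos\theta/(2H)}\,d\theta$ is nonzero, together with the analogous sign/monotonicity bookkeeping needed to pin down the self-intersection in item (3); everything else is a direct consequence of the explicit formula $z(\theta)=-\cos\theta/(2H)$ and the linearity of $\theta$ in $s$. For the nonvanishing of $L$ I would substitute $u=\cos\theta$, split into $u\in[0,1]$ and $u\in[-1,0]$, and compare the two integrals using the strict monotonicity of $u\mapsto e^{-u/(2H)}$ — the factor weights $u>0$ and $u<0$ asymmetrically, so the cancellation that would occur for the bare integral $\int\cos\theta\,d\theta=0$ is destroyed. (Alternatively, recognize $L$ as (a multiple of) a derivative of a modified Bessel function $I_0(1/(2H))$ in the parameter, namely $L = -2\pi\, I_1(1/(2H))\cdot \text{const}$, which is nonzero for $H\neq0$.) Once $L\neq0$ is in hand, items (1), (3), (4) follow from the periodicity-plus-drift picture and item (2) is immediate.
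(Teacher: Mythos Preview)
Your approach is essentially the same as the paper's: integrate $\theta'=2H$ to get $\theta(s)=2Hs+\theta_0$, read off $z(s)=-\cos(2Hs)/(2H)$ for items (2) and (4), and then analyze the $y$-coordinate to obtain the translation invariance (1) and the self-intersections (3). The paper argues (1) by an ODE-uniqueness trick (if $\{y,z,\theta\}$ solve the system then so do $\{y(\cdot+T)-y(T)+y_0,z,\theta\}$, hence they coincide), while you compute the drift $L=\int_0^{2\pi}\frac{\cos\theta}{2H}e^{-\cos\theta/(2H)}\,d\theta$ directly; in fact you go further than the paper by checking $L\neq 0$, which the paper does not verify. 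Your Bessel/monotonicity argument for $L\neq 0$ is correct (pairing $\theta$ with $\theta+\pi$ gives $L=-\frac{1}{H}\int_{-\pi/2}^{\pi/2}\cos\theta\,\sinh\!\big(\tfrac{\cos\theta}{2H}\big)\,d\theta$, whose integrand has a fixed sign). For item (3) both the paper and your proposal are sketchy; your intermediate-value idea can be made precise by comparing the ``upper'' arc $\gamma_2=\alpha|_{\theta\in[\pi/2,3\pi/2]}$ (a graph $z=g(y)\ge 0$ over $[y_2,y_1]$) with its translate $\gamma_4=\gamma_2+L$: on the overlap $[y_2,y_1+L]$ one has $g(y_2)-g(y_2-L)<0$ and $g(y_1+L)-g(y_1)>0$, so IVT yields a crossing.
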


\begin{proof}
 From $\theta'=2H$, we know that, up constants,  $\theta(s)=2Hs$. Then $z'(s)=\sin(2Hs)$ and
$$z(s)=-\frac{1}{2H}\cos(2Hs).$$
In particular, $z$ is a periodic function of period $T=\pi/H$, whose derivative vanishes in a discrete set of points, namely,
$A=\{n\pi/(2H);n\in\z\}$.   From (\ref{s-1}),
$$y'=\exp{\Big(-\frac{1}{2H}\cos(2Hs)\Big)}\cos(2Hs).$$
Then the function $y'$ vanishes at $B=A+\pi/2$ and this means that $\alpha$ is not a graph on the $y$-axis, being the velocity of $\alpha$  vertical at each point of $B$. Moreover, $z$ takes the same value at these points:  with our choice of the integration constants, this value is $z=0$.

It is easy to show that if $\{y(s),z(s),\theta(s)\}$ satisfy (\ref{s-1})-(\ref{s-2}) and $\theta'=2H$, with
initial conditions $\{y_0,z_0,\theta_0\}$, then the functions
$\{y(s+T)-y(T)+y_0,z(s),\theta(s)\}$ satisfy the same equations and initial conditions. By uniqueness of solutions of O.D.E,  they must agree with the first set of solutions. In particular, $y(s+T)=y(s)+y(T)-y_0$. Thus, we have proved that the generating curve $\alpha$ is invariant by translations of the group of translations generated by the  vector $(0,y(T)-y_0,0)$. In our notations, this group is $\{T_{2,n(y(T)-y_0)};n\in\z\}$.

Finally, the function $\theta(s)$ takes all real values, which means that the planar velocity vector $\alpha'(s)=\cos\theta(s)E_2(s)+\sin\theta(s)E_3(s)$ goes taking all the values of a unit circle $\s^1$ in a monotonic sense.
\end{proof}

\begin{figure}[hbtp]\begin{center}
\includegraphics[width=5cm]{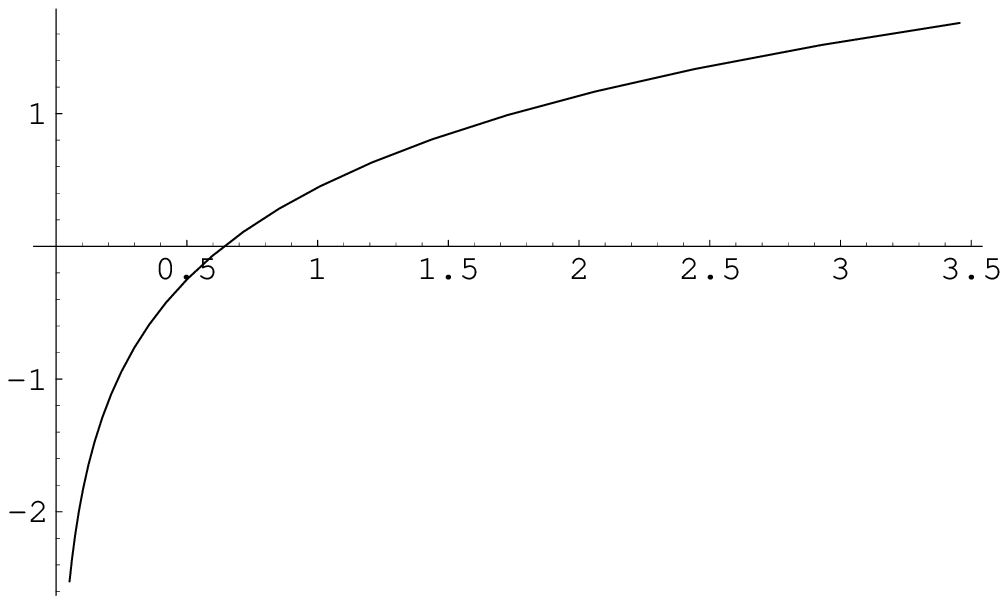}\hspace*{1cm}\includegraphics[width=5cm]{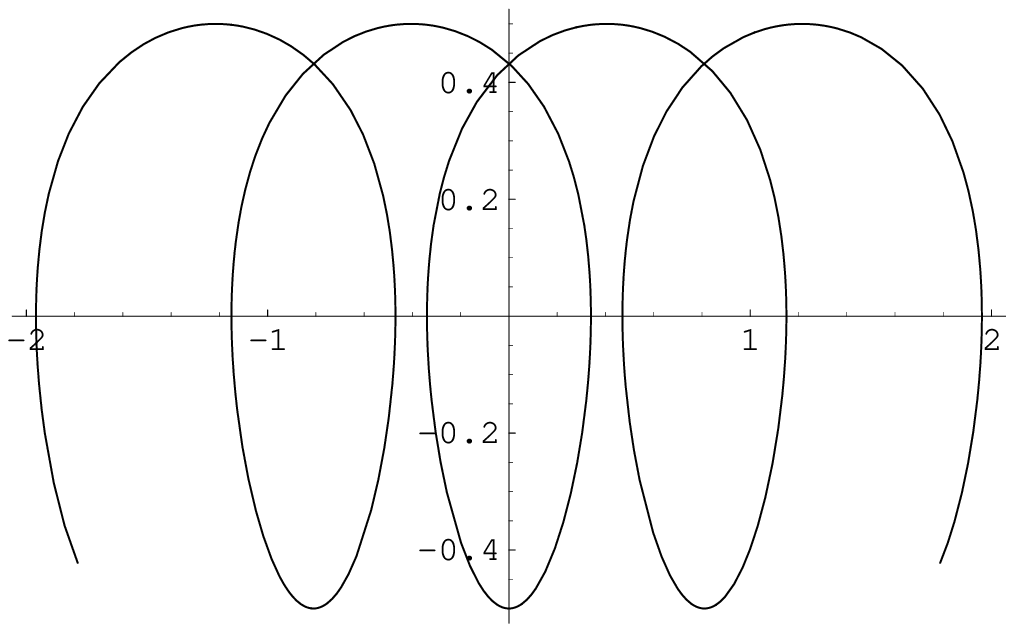}\end{center}
\caption{Generating curves of invariant surfaces with constant mean curvature:  case $H=0$ (left);
case $H=1$ (right).\label{sol-fig1}}
\end{figure}

\section{Surfaces with constant Gaussian curvature}\label{sect4}

In this section we study invariant surfaces in Sol with constant (intrinsic or extrinsic) Gaussian curvature.

\begin{theorem}\label{t-k1} Let $S$ be an invariant surface in Sol with constant intrinsic Gaussian curvature $K_{int}=c$. Up integration constants, we have the next classification:

\begin{enumerate}
\item If $c=0$, the surface is a leaf of ${\cal F}_3$ or the generating curve $\alpha$ of $S$ is
$$\alpha(s)=(0,\frac12 \Big(s\sqrt{s^2-1}-\log(s+\sqrt{s^2-1})\Big),\log(s)),\ s^2\geq 1.$$
\item If $c=-1$, the surface is a leaf of the foliation ${\cal F}_2$ or the generating curve $\alpha$ of $S$ is
the graph of $z(y)=\log(\cosh(y))$.
\item If $c\in (-1,0)$, then $\alpha$ is a graph of type $z(y)=\log(y)$, or $z(y)$ is defined in all the real line $\r$ with a single minimum, or $z(y)$ is a monotonic function defined in some interval $(a,\infty)$.
\item If $c>0$ or $c<-1$, $z(y)$ is a bounded function defined in a bounded interval $(a,b)$ with a single maximum or minimum and it is  vertical at the end points of $(a,b)$.
\end{enumerate}
Moreover, except that $S$ is a leaf of ${\cal F}_2$, the generating curve is a graph of a function $z=z(y)$.
\end{theorem}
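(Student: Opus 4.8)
The plan is to integrate the equation $K_{int}=c$ once, produce an explicit first integral, and then run a phase--line analysis of the resulting first--order ODE, split into the ranges of $c$ that occur in the statement. First I would dispose of the two degenerate generating curves: if $\cos\theta\equiv 0$ on an interval, then $\theta\equiv\pm\pi/2$, $y$ is constant, $\alpha$ is a vertical segment and $S$ is a leaf of ${\cal F}_2$, and (\ref{kint}) forces $c=-1$; if $\sin\theta\equiv 0$, then $z$ is constant, $\alpha$ is horizontal, $S$ is a leaf of ${\cal F}_3$ and $c=0$. From now on $\alpha$ is not a vertical segment, so locally a graph $z=z(y)$. Keeping the arc--length picture for a moment and using $z'=\sin\theta$ in $K_{int}=-\theta'\cos\theta-\sin^2\theta=c$, one computes
$$\frac{d}{ds}(\cos^2\theta)=-2\sin\theta\cos\theta\,\theta'=2\sin\theta\,(c+\sin^2\theta)=2\sin\theta\,(c+1-\cos^2\theta).$$
Dividing by $z'=\sin\theta$ (valid off the isolated zeros of $\sin\theta$, then extended by continuity) and viewing $w:=\cos^2\theta$ as a function of $z$ gives the linear equation $dw/dz=2(c+1)-2w$, hence $\cos^2\theta=(c+1)+D\,e^{-2z}$ for a constant $D$. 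Substituting $\cos^2\theta=(1+z'^2e^{2z})^{-1}$ from (\ref{change}) (with now $'=d/dy$) turns this into the autonomous equation
$$z'^2=\frac{-c\,e^{2z}-D}{e^{2z}\bigl((c+1)e^{2z}+D\bigr)}=:f\bigl(e^{2z}\bigr),\qquad f(\tau)=\frac{-c\tau-D}{\tau\bigl((c+1)\tau+D\bigr)}.$$
I would also record that the isometry $T_{3,\lambda}$ carries the generating curve $(0,y,z)$ to $(0,e^{\lambda}y,z+\lambda)$, which multiplies $D$ by $e^{2\lambda}$; so within the freedom in integration constants the constant $D$ may be normalized to be $-1$, $0$, or $1$.

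The main work is then the qualitative study of $f(\tau)$ on $(0,\infty)$, case by case in the signs of $c$ and $D$: one locates the numerator zero $\tau=-D/c$ and the pole $\tau=-D/(c+1)$, orders them, and reads off the subintervals where $f\ge 0$. On the corresponding interval of admissible $z$ one uses the dictionary: a zero of $f$ is a critical point of $z(y)$, a maximum or a minimum according to which end of the $z$--range it occupies; a pole of $f$ is a point where $\cos\theta=0$, so $\alpha$ is vertical there, and since $\int dz/\sqrt f$ converges at such a point it is reached at a finite value of $y$ and, because $f<0$ beyond it, $\alpha$ terminates there as a graph over $y$ — which yields the last sentence of the theorem; and at an end where $z\to\infty$ the leading order of $f$ tells whether $\int e^{z}dz$ (or $\int e^{2z}dz$) diverges, i.e. whether $y\to\infty$, and of what (typically logarithmic) type the branch is. Running this: for $c=0$ only $D\le 0$ is admissible, $D=0$ giving a leaf of ${\cal F}_3$ and $D<0$ (normalized) giving, after integrating $dy=dz/z'$ with $s:=e^{z}$, precisely the curve $\alpha(s)$ of item 1; for $c=-1$ only $D\ge 0$ is admissible, $D=0$ giving a leaf of ${\cal F}_2$ and $D>0$ integrating to $z(y)=\log\cosh y$; for $c\in(-1,0)$ the three admissible values $D=0$, $D>0$, $D<0$ give respectively the logarithmic graph $z(y)=\log(ky)$ on $(0,\infty)$, a graph over all of $\r$ with a single minimum, and a monotone graph on a half--line $(a,\infty)$ vertical at $a$; and for $c>0$ (only $D<0$) or $c<-1$ (only $D>0$) the quantity $\tau=e^{2z}$ is trapped in a compact interval whose endpoints are one zero and one pole of $f$, so $z(y)$ is a bounded graph on a bounded interval $(a,b)$ with a single maximum (if $c>0$) or minimum (if $c<-1$) and vertical tangents at $a$ and $b$. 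This is exactly the list of four cases.

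I expect the sign bookkeeping in $f(\tau)$ to be the delicate point: ordering $-D/c$ and $-D/(c+1)$ correctly in each sign pattern, deciding which end of the $z$--range carries the turning point and which carries the vertical tangent, and, at the ends with $z\to\infty$, getting the convergence or divergence of the length-to-$y$ integral right so as to decide whether the $y$--interval is bounded — all coordinated with a single consistent normalization of $D$ so that the four items come out cleanly "up to integration constants." The explicit integrations ($\alpha(s)$ in item 1 and $z=\log\cosh y$ in item 2) are short once the normalized ODE is in hand; it is the remaining, non--elementary cases where one must argue entirely through the phase line.
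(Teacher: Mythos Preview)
Your approach is correct and genuinely different from the paper's. The paper observes that $K_{int}=c$ reads $(\sin\theta)'+\sin^2\theta+c=0$, sets $p=\sin\theta$ and integrates the Riccati equation $p'=-(p^2+c)$ \emph{in the arc--length parameter}: depending on the sign of $p^2+c$ one gets $p=\sqrt{-c}\tanh(\sqrt{-c}\,s)$, $p=\sqrt{-c}\coth(\sqrt{-c}\,s)$, $p=-\sqrt{c}\tan(\sqrt{c}\,s)$, or the degenerate limits, and then reads off $z(s)$ and $y(s)$ by quadrature and discusses their ranges case by case. Your route instead produces the closed first integral $\cos^2\theta=(c+1)+D\,e^{-2z}$, passes to the graph variable to obtain the autonomous equation $z'^2=f(e^{2z})$, and replaces the explicit trigonometric/hyperbolic formulas by a phase--line analysis of the rational function $f$. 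What the paper's method buys is explicit parametrizations of $\alpha$ in every case (not just items 1 and 2); what yours buys is a uniform organizing principle --- the constant $D$, normalizable by $T_{3,\lambda}$, labels the three subfamilies in item~3 and is automatically forced to a single sign when $c>0$ or $c<-1$ --- together with a clean reason for the last sentence of the theorem (a vertical tangent corresponds to a pole of $f$, and $f<0$ beyond it). Your caveat is well placed: the only real work is the sign table of $f$ and the convergence/divergence of $\int dz/\sqrt{f}$ at each end, and you have identified the behaviour correctly in all cases.
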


\begin{proof}

Equation (\ref{kint}) writes as $\theta'\cos\theta+\sin^2\theta=-c$ or
\begin{equation}\label{kint2}
(\sin\theta)'+\sin^2\theta+c=0.
\end{equation}
If $\cos\theta=0$ at some point, then
$c=-1$. In this case, if $\cos\theta\equiv 0$, then $y$ is a constant function. This means that $\alpha$ is a vertical straight-line and $S$ is a leaf of ${\cal F}_2$. Thus, if $c\not=-1$, $\cos\theta\not=0$ and from (\ref{s-1}), $\alpha$ is the graph of $z=z(y)$.

If we put $p=\sin\theta$, then (\ref{kint2}) writes $p'+p^2+c=0$, that is,
\begin{equation}\label{kint3}
\frac{p'}{p^2+c}=-1.
\end{equation}
This equation makes sense only if $p^2+c\not=0$ and then, it can be integrated by parts.
 On the contrary, that is,
if $\sin^2\theta+c=0$, then $c\in[-1,0]$. We distinguish cases:
\begin{enumerate}
\item Case $c=-1$. We know that $S$ is a leaf of ${\cal F}_2$ if $\cos\theta\equiv 0$. On the contrary, a first integration  gives $\sin\theta=\tanh(s)$ and from (\ref{s-2}), $z(s)=\log(\cosh(s))$. Then $y'(s)=1$, that is, $y(s)=s$. This means that $\alpha$ is the graphic of $z(y)=\log(\cosh(y))$.
\item Case $c=0$.  If $\sin\theta\equiv 0$, then (\ref{s-2}) shows that $z$ is a constant function,
 $\alpha$ is a horizontal curve and $S$ is a leaf of ${\cal F}_3$. On the other case, $\sin\theta=1/s$ and by (\ref{s-2}), we have $z(s)=\log(s)$. It is possible to solve (\ref{s-1}) obtaining
$$y(s)=\frac12 \Big(s\sqrt{s^2-1}-\log(s+\sqrt{s^2-1})\Big).$$
\item Case $c\in (-1,0)$. If $c+\sin^2\theta=0$ at some point (for example, at $s=0$), the solution of (\ref{s-1})-(\ref{s-2}) is up constants
$\theta(s)=\theta(0):=\theta_0$,
 $y(s)=\cot\theta_0e^{(\sin\theta_0) s}$ and $z(s)=(\sin\theta_0) s$. This means that
 $\alpha$ is the graphic of $z(y)=\log((\tan\theta_0) y)$. Finally, we assume that $\sin^2\theta+c\not=0$ at some point (for example, at $s=0$).   A first integration of (\ref{kint3}) depends on the sign of $\sin^2\theta_0+c$.
\begin{enumerate}
\item Assume $\sin^2\theta+c<0$. Then (\ref{kint3}) gives $\sin\theta=\sqrt{-c}\tanh(\sqrt{-c}\ (s+\lambda))$. Letting $s\rightarrow\infty$, we conclude that $\sin\theta$ vanishes at some point. Without loss of generality, we suppose that this occurs at $s=0$. Moreover, $z'$ vanishes only at one point, namely, $s=0$, and $z''(s)>0$. This means that $z=z(s)$ is a convex function with only a single minimum at $s=0$. Finally,
$$|y'(s)|=\cosh(\sqrt{-c}s)\sqrt{1+c\tanh^2(\sqrt{-c}s)}\geq  \sqrt{1+c}$$
which means that the function $y$ is defined in all $\r$. Thus $z=z(y)$ with $y\in\r$. Since $z'(y)=z'(s)/y'(s)$, we know that $y=0$ is the only extremum of $z(y)$ and from (\ref{change}), we conclude $z''(0)=-c>0$, that is, $z=z(y)$ has a minimum at $y=0$.
\item Assume $\sin^2\theta+c>0$. Now (\ref{kint3}) gives $\sin\theta=\sqrt{-c}\cot(\sqrt{-c}(s+\lambda))$, which is defined in an open interval of $\r$ of type $(a,\infty)$. Suppose that $\lambda$ is chosen to the domain of $\sin\theta$ is $(0,\infty)$, that is, we take $\sin\theta=\sqrt{-c}\cot(\sqrt{-c}s)$. Then $z$ is an increasing function and
$z(s)=\log(\sinh(\sqrt{-c}s))$. Moreover,
$$|y'(s)|=\sinh(\sqrt{-c}s)\sqrt{1+c\cot^2(\sqrt{-c}s)}\geq 1.$$
Thus $y$ is defined in an interval of type $(M,\infty)$.
\item Case $c>0$. Now $\sin\theta=-\sqrt{c}\tan(\sqrt{c}\ s)$ and $z(s)=\log(\cos(\sqrt{c}\ s))$. Moreover $z'$ vanishes at exactly one  point ($s=0$). The same reasoning as above shows that $z=z(y)$ has a maximum at that point. Since
$-1\leq\sin\theta\leq 1$, the values of $s$ lies in some bounded domain $I=(-M,M)$. The values of $y'(s)$ are bounded because
$$|y'(s)|\leq \cos(\sqrt{c}s)\sqrt{1-c\tan^2(\sqrt{c}s)}\leq \cos(\sqrt{c}s)\leq \cos(\sqrt{c}M).$$
Then the function $y$ takes values in some bounded domain $(-y_M,y_M)$. Finally,
$$\lim_{s\rightarrow \pm M}|z'(s)|=1,\hspace*{1cm}\lim_{s\rightarrow \pm M}|y'(s)|=0,$$
and this means that $\alpha$ is vertical at the points $\pm y_M$.
\end{enumerate}
\item Case $c<-1$.  Now the reasoning is similar than the case $c>0$. We have $\theta=\sqrt{-c}\tanh(\sqrt{-c}s)$ and
$z(s)=\log(\cosh(\sqrt{-c}s))$. The function $z=z(y)$ is convex with a minimum at the origin. Also, the function $z=z(y)$ is defined in some bounded domain $(-y_M,y_M)$ and the generating curve $\alpha$ is vertical at $\pm y_M$.

\end{enumerate}
\end{proof}

\begin{remark} \label{re-1}In the cases $c<-1$ and $c>0$ the derivatives of the functions $y(s)$ and $z(s)$ are bounded at the end points  of the maximal domain $(-M,M)$. However one can not continue the solutions because $\cos\theta\rightarrow 0$ and $\sin^2\theta\rightarrow 1$ as $s\rightarrow\pm M$ and so, from
 (\ref{kint2}), the function $\theta'$ goes to $\infty$ as $s\rightarrow\pm M$.
\end{remark}

\begin{figure}[hbtp]\begin{center}
\includegraphics[width=5cm]{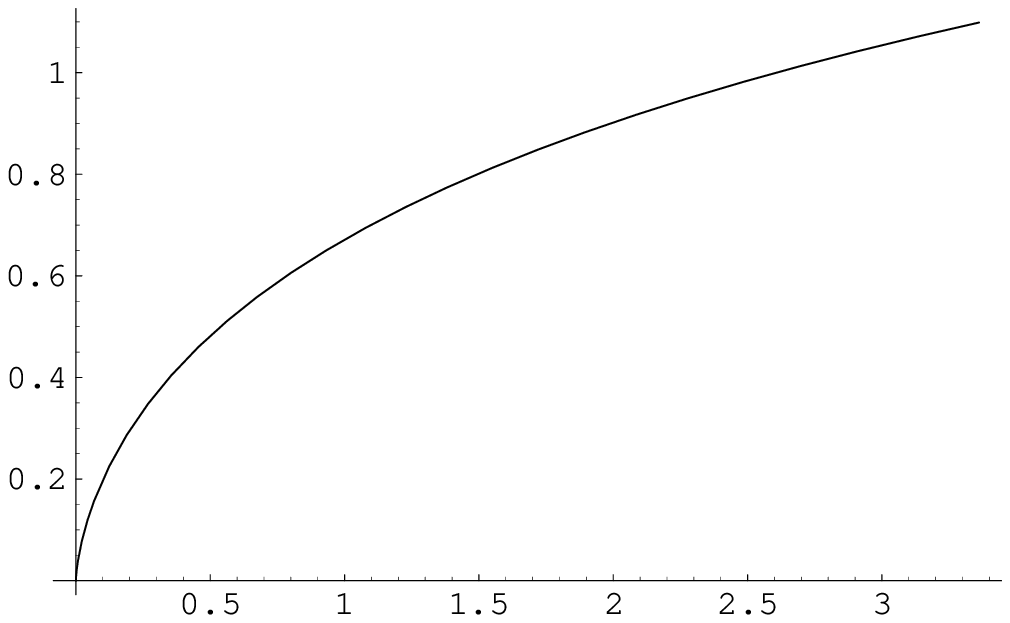}\hspace*{1cm}\includegraphics[width=5cm]{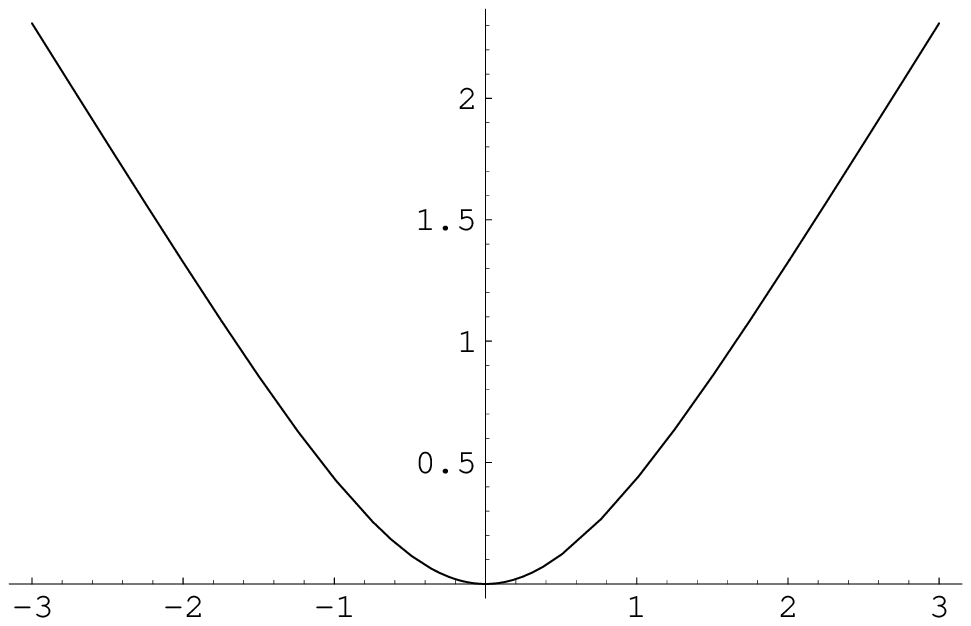}\\
\hspace*{1cm}\\
\includegraphics[width=5cm]{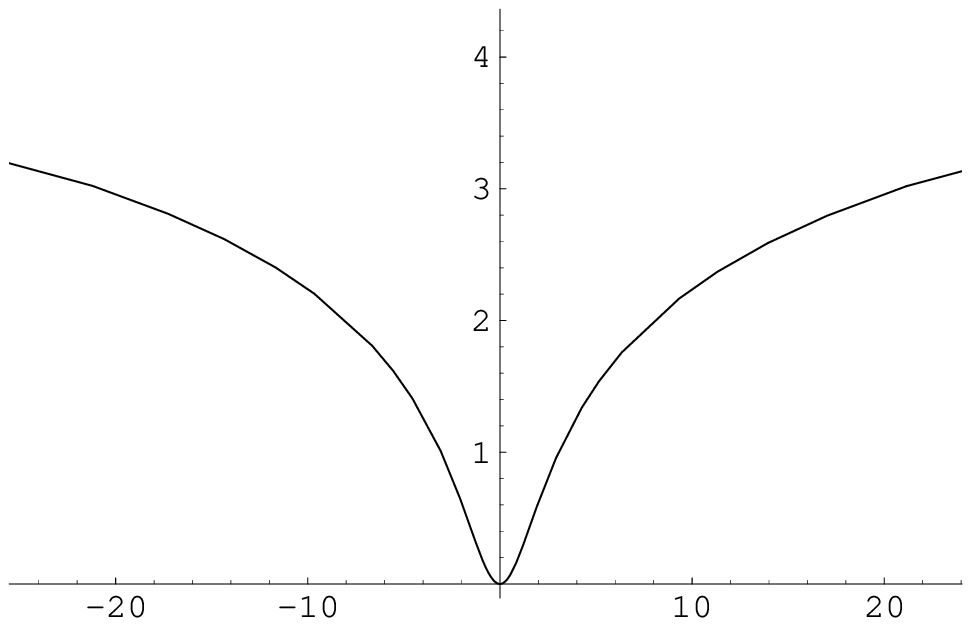}\hspace*{1cm}\includegraphics[width=5cm]{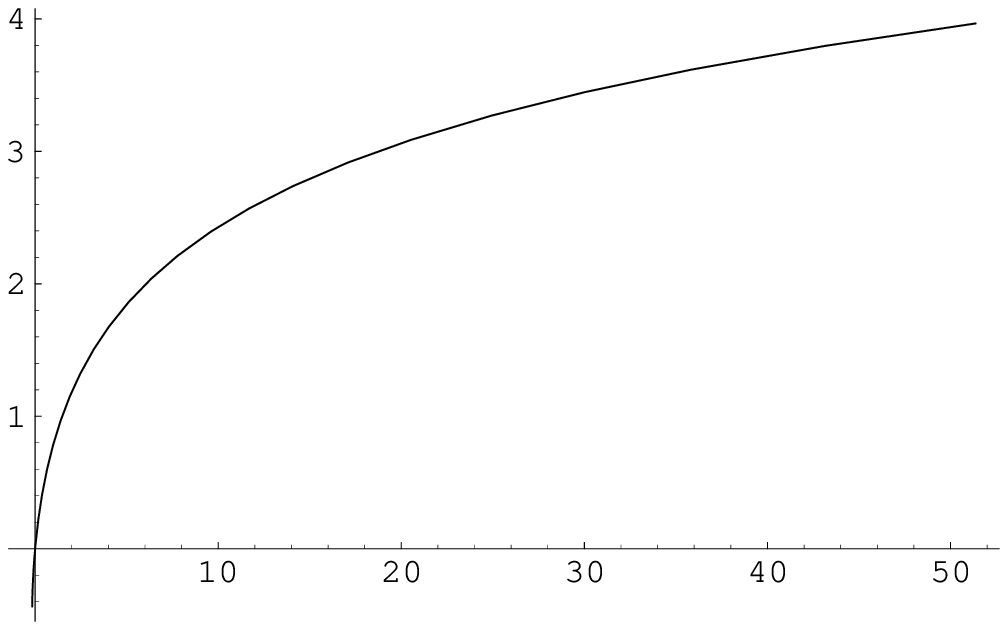}\end{center}
\caption{Generating curves of invariant surfaces with $K_{int}=0$ (top, left); $K_{int}=-1$ (top, right); $K_{int}=c\in(-1,0)$,  case $\sin\theta_0+c<0$ (bottom, left); and $K_{int}=c\in(-1,0)$, case $\sin\theta_0+c>0$ (bottom, right).\label{sol-fig2}}
\end{figure}

\begin{theorem} Let $S$ be an invariant surface in Sol with constant extrinsic Gaussian curvature $K_{ext}=c$. Up integration constants, we have the next classification:

\begin{enumerate}
\item If $c=0$, the surface is a leaf of ${\cal F}_2$ or  the generating curve $\alpha$ of $S$ is
$$\alpha(s)=(0,\tanh(s),-\log(\cosh(s))).$$

\item If $c=-1$, the surface is a leaf of ${\cal F}_3$  or
$$\alpha(s)=(0,-\frac{s^2-1}{s}+\log(s+\sqrt{s^2-1}),-\log(s)).$$
\item If $c\in (-1,0)$, then $\alpha$ is the graph $z(y)=\log(y)$; or $z(y)$ is defined in a bounded interval $(-M,M)\subset \r$ and it is asymptotic to vertical lines $y=\pm M$; or $z(y)$ is defined in a bounded interval $(m,M)$ being asymptotic to the vertical line $y=m$.
\item If $c>0$ or $c<-1$, the function $z(y)$ is defined in a bounded interval $I=(a,b)$ with a single maximum or minimum, it is bounded and it is  vertical at the end points of $I$.
\end{enumerate}
\end{theorem}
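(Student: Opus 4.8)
The plan is to integrate the ODE coming from the constant extrinsic curvature condition, mirroring the structure of the proof of Theorem~\ref{t-k1}. From (\ref{curv}) the condition $K_{ext}=c$ reads $-\cos\theta(\theta'+\cos\theta)=c$, i.e.\ $\cos\theta\,\theta'+\cos^2\theta+c=0$. First I would observe that $\cos\theta\,\theta'=(\sin\theta)'$, so writing $p=\sin\theta$ the equation becomes $p'+(1-p^2)+c=0$, that is $p'=p^2-(1+c)$, a separable Riccati-type ODE $\dfrac{p'}{p^2-(1+c)}=1$. The degenerate locus is $p^2=1+c$; since $p=\sin\theta\in[-1,1]$, this can only be attained when $1+c\in[0,1]$, i.e.\ $c\in[-1,0]$. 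Exactly as in Theorem~\ref{t-k1}, I would treat separately the case where $\cos\theta$ vanishes identically (forcing $c=0$ from the equation $\cos^2\theta+c=0$, giving $y$ constant and $S$ a leaf of $\mathcal F_2$) and the case where $\sin^2\theta\equiv 1+c$ is constant (so $\theta$ is constant, $z(s)=(\sin\theta_0)s$, and one checks this yields the graph $z(y)=\log((\tan\theta_0)y)$, which lives in the range $c\in(-1,0)$, or for $c=-1$ a leaf of $\mathcal F_3$ when $\sin\theta_0=0$).

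Next, on the open set where $p^2\neq 1+c$, I would integrate $\dfrac{p'}{p^2-(1+c)}=1$ and split into the three sign regimes of $1+c$, just as the previous proof split on the sign of $\sin^2\theta_0+c$. For $c=-1$ ($1+c=0$) one gets $p'=p^2$, hence $\sin\theta=-1/s$ up to translation, $z'=\sin\theta$ gives $z(s)=-\log(s)$, and (\ref{s-1}) integrates to $y(s)=-\frac{s^2-1}{s}+\log(s+\sqrt{s^2-1})$, matching item~2. For $c=0$ ($1+c=1$) one gets $p'=p^2-1$, so $\sin\theta=-\tanh(s)$, then $z(s)=-\log(\cosh s)$ and $y'(s)=e^{z}\cos\theta=\operatorname{sech}^2 s$, whence $y(s)=\tanh(s)$, matching item~1. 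For $c\in(-1,0)$ one has $0<1+c<1$ and $\sin\theta$ is given by a $\tanh$ or $\coth$ (equivalently $\operatorname{coth}$, depending on whether $|\sin\theta|<\sqrt{1+c}$ or $>\sqrt{1+c}$) of argument $\sqrt{1+c}\,(s+\lambda)$; analysing the sign of $z'=\sin\theta$ and the growth of $|y'(s)|=e^{z}|\cos\theta|$ shows in the bounded-$\sin\theta$ branch that $z(y)$ lives on a bounded interval $(-M,M)$ asymptotic to the two vertical lines $y=\pm M$, while in the other branch $z(y)$ lives on $(m,M)$ asymptotic to $y=m$, matching item~3. Finally for $c>0$ or $c<-1$ we have $1+c\notin[0,1]$, so $p^2-(1+c)$ never vanishes; integration gives $\sin\theta$ as a $\tan$ or $\tanh$ whose argument forces $s$ into a bounded interval $(-M,M)$, $z'$ vanishes at exactly one interior point giving the single extremum, and the estimates on $|y'|$ show $y$ ranges over a bounded interval with $\alpha$ becoming vertical at the endpoints (the reason one cannot continue past the endpoints being, as in Remark~\ref{re-1}, that $\theta'\to\infty$ there), matching item~4.

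The routine but careful part is the explicit integration of (\ref{s-1}) in each branch to recover the closed forms for $y(s)$ — substituting the hyperbolic or trigonometric expression for $\sin\theta$ into $y'=e^{z}\cos\theta$ and integrating — together with the sign bookkeeping needed to identify which $\pm$ branch of $\cos\theta=\pm\sqrt{1-\sin^2\theta}$ to take so that the parametrisations agree with items~1 and~2 up to the allowed isometries and integration constants. The main obstacle I anticipate is the qualitative analysis in case $c\in(-1,0)$: one must show that in the $|\sin\theta|<\sqrt{1+c}$ branch the generating curve is asymptotic to \emph{two} vertical lines (i.e.\ $y$ stays bounded as $s\to\pm\infty$ even though $s$ ranges over all of $\r$), which requires a convergence estimate on $\int e^{z}\cos\theta\,ds$ using $z(s)=\log(\cosh(\sqrt{1+c}\,s))+$const growing linearly while $\cos\theta\to 0$ exponentially; and correspondingly in the other branch showing the single vertical asymptote. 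These are the same kinds of estimates used for $K_{int}$ in Theorem~\ref{t-k1}, so the argument structure carries over, only the constants change.
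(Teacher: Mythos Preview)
Your approach is essentially identical to the paper's: the same substitution $p=\sin\theta$ yielding $p'/(p^2-(1+c))=1$, the same degenerate-locus analysis referring back to Theorem~\ref{t-k1}, and the same case split on the sign and size of $1+c$ with the same explicit integrations and qualitative estimates. One small correction in your sketch of the case $c\in(-1,0)$, $|\sin\theta|<\sqrt{1+c}$: there $z(s)=-\log(\cosh(\sqrt{1+c}\,s))$ (note the minus sign), so it is $e^{z}$ that decays exponentially while $\cos\theta\to\sqrt{-c}>0$, not the other way around --- the integral $\int e^{z}\cos\theta\,ds$ still converges, just for the opposite reason you state.
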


\begin{proof} From (\ref{curv}), $-\cos\theta(\theta'+\cos\theta)=c$,
 and as in the proof of Theorem \ref{t-k1}, put
$p=\sin\theta$. Now we have
$$\frac{p'}{p^2-c-1}=1.$$
The reasoning is similar as in Theorem \ref{t-k1}. First, we consider that $\sin^2\theta-c-1\equiv 0$, that is, $c\in [-1,0]$.  Then the sectional curvature $K(e_1\wedge e_2)$ is $-1-2c$ and $K_{int}=-1-c$. These cases were previously studied in Theorem \ref{t-k1} corresponding there with
$p^2+K_{int}=0$. In particular, if $c\in (-1,0)$ the solution is $z(y)=\log(y)$; if $c=0$, then $S$ is a leaf of ${\cal F}_2$ or $z(y)=\log(\cosh(y))$ and if $c=-1$, $S$ is a leaf of ${\cal F}_3$. The rest of cases are the following:
\begin{enumerate}
\item Case $c=-1$. Then $z(s)=-\log(s)$ and
$$y(s)=-\frac{s^2-1}{s}+\log(s+\sqrt{s^2-1}).$$
\item Case $c=0$. Then $z(s)=-\log(\cosh(s))$ and $y(s)=\tanh(s)$.
\item Case $c\in (-1,0)$. The function $\theta$ is given by $\sin\theta=-\sqrt{c+1}\tanh(\sqrt{c+1}s)$.
\begin{enumerate}
\item If $\sin^2\theta_0-c-1=0$, then the solution is, up constants, $z(y)=\log(y)$.
\item If $\sin^2\theta_0-c-1<0$, $z(s)=-\log(\cosh(\sqrt{c+1}s))$ and it is defined in the whole of $\r$. Again, $z(s)$ has a maximum at $s=0$. Now
$$|y(\infty)|=|y(0)|+\int_0^{\infty}|y'(t)|dt\leq |y(0)|+\int_0^{\infty}2e^{-\sqrt{(c+1)}t}dt<\infty.$$
This shows that the function $y$ takes values in some bounded interval $(-M,M)$. Thus the generating curve $z=z(y)$ is also defined in some bounded interval and since $z(s)$ takes values arbitrary big, the graphic of $\alpha$ is asymptotic to the two vertical lines   $y=\pm M$.
\item If $\sin^2\theta_0-c-1>0$, we obtain  $\sin\theta=-\sqrt{c+1}\coth(\sqrt{c+1}(s+\lambda))$, $\lambda\in\r$,  and
$z(s)=-\log(\sinh(-\sqrt{c+1}(s+\lambda))$. Assuming for example that $\sin\theta_0>0$,
the constant $\lambda$ is negative with
 $\sin\theta_0=-\sqrt{c+1}\coth(\sqrt{c+1}\lambda)$.  The function $z$ is monotonic on $s$ and it is  defined in some interval of type $(-\infty,M)$, where $1=(c+1)\coth^2(\sqrt{c+1}M)$. As
$$y'(s)=\frac{1}{\sinh(-\sqrt{c+1}(s+\lambda))}\sqrt{1-(c+1)\coth^2(\sqrt{c+1}(s+\lambda))},$$
the value of $y'(M)$ is bounded and
$$|y(-\infty)|<|y(0)|+\int_{-\infty}^0|y'(s)|ds\leq |y(0)|+\int_{-\infty}^0 \frac{1}{\sinh(-\sqrt{c+1}(s+\lambda))}<\infty.$$
This shows that the value of $y$ belongs an interval of type $(m,M)$. Thus $\alpha$ is asymptotic to the vertical line $y=m$.
\end{enumerate}
\item Case $c>0$. Now $\sin\theta=-\sqrt{c+1}\tanh(\sqrt{c+1}\ s)$ and $z(s)=-\log(\cosh(\sqrt{c+1}\ s))$.
The curve has a single maximum at $s=0$. From the expression of $\sin\theta$ and since $\sqrt{c+1}>1$, the variable $s$ can not take arbitrary values: exactly, $\theta$ is defined whenever  $(c+1)\tanh^2(\sqrt{c+1} s)\leq 1$. Then $\theta$ is defined in some bounded domain $(-M,M)$. On the other hand, and because
$$y'(s)=\frac{1}{\cosh(\sqrt{c+1}s)}\sqrt{1-(c+1)\tanh^2(\sqrt{c+1}s)},$$
the values $y'(\pm M)$ vanish and since the domain  of $s$ is bounded, $y$ takes values in some interval $(-y_M,y_M)$. Because $y'(\pm M)=0$ and $z'(\pm M)=1$, we conclude that the generating curve $\alpha$ is vertical at the points $\pm y_M$. Finally it is evident that the function $z(y)$ is bounded in the maximal domain.
\item Case $c<-1$. Then $\sin\theta=\sqrt{-c-1}\tan(\sqrt{-c-1}\ s)$ and $z(s)=-\log(\cos(\sqrt{-c-1}\ s))$. The function $z$ has a single minimum at $s=0$. Now the conclusions are similar as the case $c>0$, and we omit the details.
\end{enumerate}
\end{proof}
For the cases $c<-1$ and $c>0$ we can apply the same comments as in Remark \ref{re-1}.

\section{Linear Weingarten surfaces}\label{sect5}

A generalization of umbilical surfaces, as well as, surfaces with constant mean curvature, are the Weingarten surfaces. A Weingarten surface is a surface that satisfies a smooth relation of type $W(\kappa_1,\kappa_2)=0$, where $\kappa_i$ are the principal curvatures of the surface. Equation $W(\kappa_1,\kappa_2)=0$ gives other relation of type $U(H,K_{ext})=0$. Among the choices of $W$ and $U$, the simplest case is that they are linear on its variables. So, we say that $S$ is a {\it linear Weingarten surface} if  satisfies one of the two (non-equivalent) conditions:
\begin{equation}\label{w1}
a\kappa_1+b\kappa_2=c,
\end{equation}
or
\begin{equation}\label{w2}
aH+bK_{ext}=c,
\end{equation}
where $a$, $b$ and $c$ are constant. In particular, if $a=-b$, $c=0$ in (\ref{w1}) we have umbilical surfaces, whereas if $a=b$, the surface has constan mean curvature. In (\ref{w2}), the choices $b=0$ and $a=0$ give surfaces with constant mean curvature or constant extrinsic Gauss curvature, respectively. In terms of the angle function $\theta$, equations (\ref{w1}) and (\ref{w2}) write as $a\theta'+(a-b)\cos\theta=c$  and $(a-2b\cos\theta)\theta'-2b\cos^2\theta=2c$,   respectively.

A complete study of the solutions of above two equations is not difficult, although the number of cases depending on the constants $a$, $b$ and $c$ makes lengthy the statements of results. For example, a simple case is the choice $a=0$ in (\ref{w1}): the generating curve $\alpha$ satisfies that  $\cos\theta$ is a constant function, that is, $\theta$ is a constant function $\theta_0$.
Then the generating curve is $\alpha(s)=(0,(\cot\theta_0)e^{(\sin\theta_0)s},(\sin\theta_0)s)$.

In order to simplify the proofs, we  are going to consider in this section the linear relation  (\ref{w1})
when $c=0$. So we will assume that $\kappa_1=m\kappa_2$.

\begin{theorem}\label{t-w} Let $S$ be an invariant surface in Sol that satisfies a relation of type
$\kappa_1=m\kappa_2$. Then $S$ is a leaf of ${\cal F}_2$ or we have the following classification according to the values of the parameter $m$:
\begin{enumerate}
\item If $m=1$ the surface is an umbilical surface.
\item If $m=-1$, the surface is a minimal surface.
\item If $m>-1$ or $m<-2$, then the generating curve $\alpha$ is a graph of $z=z(y)$, with a single minimum ($m<-2$) or single maximum ($m>-1$). Moreover, $\alpha$ is asymptotic to two vertical lines.
\item If $m\in (-2,-1)$, $\alpha$ is a graph of $z=z(y)$ defined in the whole of $\r$ and it presents a single minimum.
\item If $m=-2$, $\alpha$ is given by the graph of $z(y)=\log(\cosh(y))$.
\end{enumerate}
\end{theorem}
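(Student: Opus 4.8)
The plan is to rewrite the Weingarten relation as an ODE for the angle function $\theta$ and then integrate it explicitly. With the principal curvatures $\kappa_1=\theta'+\cos\theta$ and $\kappa_2=-\cos\theta$ from Section~2, the condition $\kappa_1=m\kappa_2$ is the first-order autonomous equation
$$\theta'=-(m+1)\cos\theta.$$
Two values of $m$ are disposed of at once: for $m=1$ the relation is $\kappa_1=\kappa_2$, so $S$ is totally umbilical and the classification of \cite{sot} applies; for $m=-1$ the equation is $\theta'=0$, whence $H=\tfrac12\theta'=0$ and $S$ is minimal, so Theorem~\ref{t-minimal} applies. For the remaining values of $m$ I would first note that $\theta\equiv\pm\pi/2$ are equilibria; hence, by uniqueness for the autonomous ODE, either $\cos\theta\equiv0$ — and then (\ref{s-1}) forces $y$ to be constant, so $\alpha$ is a vertical segment and $S$ a leaf of ${\cal F}_2$ — or $\cos\theta$ never vanishes, the two sign possibilities producing congruent surfaces via the isometry $(x,y,z)\mapsto(x,-y,z)$, so we take $\cos\theta>0$; by (\ref{s-1}) the generating curve is then a graph $z=z(y)$.

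Next I would integrate the ODE in this graph case, $m\neq\pm1$ and $\cos\theta>0$, using the substitution already employed in Section~\ref{sect4}: put $p=z'=\sin\theta$. Differentiating and using the equation gives $z''=\theta'\cos\theta=-(m+1)(1-z'^2)$, that is, $z''/(1-z'^2)=-(m+1)$; since $|z'|<1$ throughout, integrating and placing the origin of arclength at the point where $z'=0$ yields
$$z'(s)=-\tanh\big((m+1)s\big),\qquad z(s)=-\frac{1}{m+1}\log\cosh\big((m+1)s\big),$$
the additive constant in $z$ being absorbed by an ambient isometry. Consequently $\cos\theta=1/\cosh((m+1)s)$ and, by (\ref{s-1}),
$$y'(s)=e^{z}\cos\theta=\cosh\big((m+1)s\big)^{-\frac{m+2}{m+1}}.$$
All of the remaining assertions then follow from the sign of the exponent $\beta:=\tfrac{m+2}{m+1}$ and of $-\tfrac{1}{m+1}$.

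The case analysis on $\beta$ is the heart of the matter. If $\beta>0$, i.e. $m>-1$ or $m<-2$, then $y'(s)$ decays exponentially as $s\to\pm\infty$, so $y$ tends to finite limits $y(0)\pm y_\infty$ while, by evenness of $y'$, the curve is symmetric about $y=y(0)$; meanwhile $z$ has a single extremum at $s=0$, a maximum with $z\to-\infty$ when $m>-1$ (since $-\tfrac1{m+1}<0$) and a minimum with $z\to+\infty$ when $m<-2$, so $z=z(y)$ has the claimed single extremum and $\alpha$ is asymptotic to the two vertical lines $y=y(0)\pm y_\infty$. If $\beta<0$, i.e. $-2<m<-1$, then $y'(s)=\cosh((m+1)s)^{|\beta|}\to\infty$, so $y$ is a proper increasing function onto $\r$ and $z=z(y)$ is defined on all of $\r$, with $z''(0)=-(m+1)>0$ giving the single minimum. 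If $\beta=0$, i.e. $m=-2$, then $y'(s)\equiv1$, so $y=s$ and $z(y)=\log\cosh(y)$. I expect the main obstacle to be precisely this last part: keeping the sign bookkeeping for the exponent and the extremum straight, and making the convergence estimate for $y(s)$ rigorous so as to separate ``asymptotic to two vertical lines'' from ``defined on all of $\r$''. One should also be slightly careful when peeling off the ${\cal F}_2$ leaf that the uniqueness argument for the equilibria $\theta\equiv\pm\pi/2$ is correctly invoked, and that the integration step is not applied when $m=-1$.
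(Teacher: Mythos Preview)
Your proposal is correct and follows essentially the same route as the paper: derive the ODE $\theta'+(m+1)\cos\theta=0$, dispose of $m=\pm1$, peel off the equilibrium $\cos\theta\equiv0$ as the ${\cal F}_2$ leaf, integrate explicitly to obtain $\sin\theta=-\tanh((m+1)s)$, $z(s)=-\tfrac1{m+1}\log\cosh((m+1)s)$ and $y'(s)=\cosh((m+1)s)^{-\frac{m+2}{m+1}}$, and then read off the geometry from the sign of the exponent $\beta=\tfrac{m+2}{m+1}$. The only cosmetic difference is that the paper integrates $\theta$ directly (obtaining $\theta(s)=-2\arctan\tanh(\tfrac{m+1}{2}s)$) rather than passing through $p=\sin\theta$, but the resulting formulas and the case split are identical.
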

\begin{proof}
The generating curve $\alpha$ is given by
\begin{equation}\label{w11}
\theta'+(1+m)\cos\theta =0.
\end{equation}
We discard the case $m=1$ that gives umbilical surfaces, which were studied in  \cite[Proposition 19]{sot}, and the case $m=-1$, which corresponds with the minimal case studied in Theorem \ref{t-minimal}. If $\theta'$ vanishes at some point $s_0$, then $\cos\theta(s_0)=0$. By uniqueness of solutions, $\theta(s)=\pm\pi/2$, that is, $\theta$ is a constant function. Moreover, $z(s)=\pm s$ and $y(s)$ is a constant function. This means  that $\alpha$ is a vertical straight-line and $S$ is a leaf of ${\cal F}_2$. In other words, each leaf of ${\cal F}_2$ satisfies the relation $\kappa_1=m \kappa_2$ for any $m$, since $\kappa_1=\kappa_2=0$ on $S$.

On the contrary, we assume that  $\theta'\not=0$. Up constants, an integration of (\ref{w11}) gives
     $$\theta(s)=-2\arctan(\tanh(\frac{m+1}{2}s)),$$
that is, $\theta(0)=0$.
Taking limits, we obtain
     $$\lim_{s\rightarrow\pm\infty}\theta(s)=\mp\frac{\pi}{2}.$$
     As $y'=e^z\cos\theta$, this means that $y'\not=0$ and  $\alpha$ is a graph  $z=z(y)$.     Now, we have
     $$\sin\theta(s)=-\tanh((m+1)s),\hspace*{.5cm} \cos\theta(s)=\frac{1}{\cosh((m+1)s)}$$
     $$z(s)=-\frac{1}{m+1}\log(\cosh((m+1)s)),\hspace*{.5cm}y'(s)=\Big(\cosh((m+1)s)\Big)^{-\frac{m+2}{m+1}}.$$
We distinguish cases.
\begin{enumerate}
\item Assume $\frac{m+2}{m+1}>0$, that is, $m>-1$ or $m<-2$, then
$$y(\infty)-y(0)\leq \int_0^{\infty}|y'(s)|ds\leq \frac{1}{|m+1|}\int_0^{\infty}(e^{-t})^\frac{m+2}{m+1}dt<\infty.$$

 This shows that the the function $y$ is bounded. Then the function $z(y)$ is defined in a bounded domain $I=(-M,M)$. As $z(\pm\infty)=\infty$, this means that the graphic of the generating curve $\alpha$ is asymptotic to two vertical lines at $y=\pm M$. On the other hand, $z'$ only vanishes at $s=0$  and $z''(s)=\theta'\cos\theta=-(m+1)\cos^2\theta$. This implies that $z$ (or $\alpha$) has an absolute minimum or absolute maximum depending if $m+1<0$ or $m+1>0$, respectively.
\item Case that $m\in(-2,-1)$. The function $z(s)$ takes arbitrary values with a minimum at $s=0$. On the other hand, $y'(s)\geq 1$, and so $y=y(s)$ takes values in all $\r$. Thus the generating curve $\alpha$ is a graph of the function $z=z(y)$ defined for any $y\in\r$.
\item Case $m=-2$. We find that $z(s)=\log(\cosh(s))$ and $y(s)=s$. Thus $\alpha$ is the graph of $z(y)=\log(\cosh(y))$.
\end{enumerate}

\end{proof}

\begin{remark} If we put $m=1$, the linear Weingarten says that $S$ is an umbilical surface. As we have pointed out, umbilical surfaces have been studied in \cite{sot}. The analytic properties obtained there agree with our  results corresponding to the more general case $m>-1$ in Theorem \ref{t-w}.
\end{remark}

\begin{figure}[hbtp]\begin{center}
\includegraphics[width=5cm]{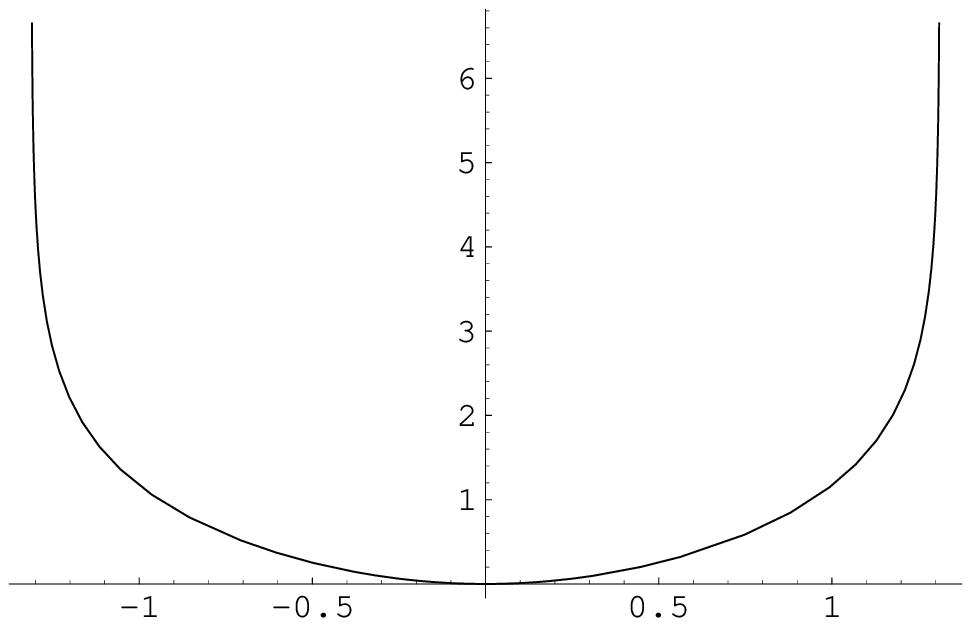}\hspace*{1cm}\includegraphics[width=5cm]{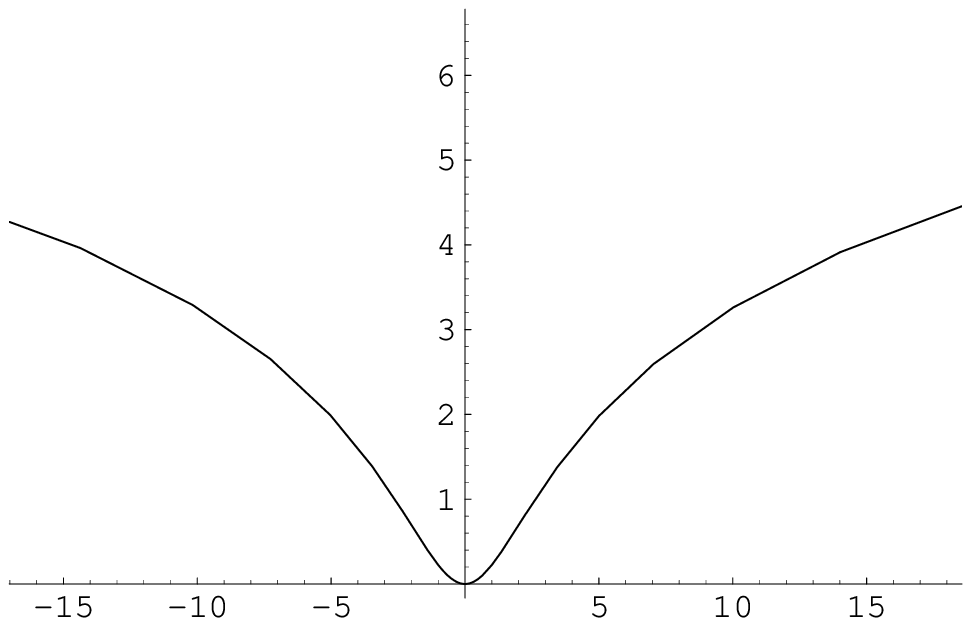}\end{center}
\caption{Linear Weingarten surfaces with $\kappa_1=m\kappa_2$: $m=-3$  (left) and $m=-3/2$ (right).\label{sol-fig3}}
\end{figure}


\end{document}